\documentclass[11pt]{amsart}
\usepackage{amssymb}
\usepackage{amsmath}
\usepackage{amscd}
\usepackage{verbatim}
\usepackage{amssymb,amsfonts,pstricks,epsf}
\usepackage{graphicx}
\usepackage{epsfig}
\usepackage{color}
\usepackage{stmaryrd}
\usepackage{mathrsfs}
\usepackage{marginnote}
\usepackage{geometry}
\usepackage{mathtools}

 \geometry{
 left=1in, right=1in,
 top=1in,
 }

\newtheorem{theorem}{Theorem}[section]
\newtheorem{lemma}[theorem]{Lemma}

\newcommand{\R}{\mathbb R}

\newcommand{\N}{\mathbb N}

\definecolor{plum}{rgb}{1.0, 0.0, 1.0}

\begin{document}

\title[]{A sharp estimate for Neumann eigenvalues  of the Laplace-Beltrami operator for domains in a hemisphere }

\author{Rafael D. Benguria, Barbara Brandolini, and Francesco Chiacchio}

\address{Rafael D. Benguria\\
Instituto de F\'\i sica,  Facultad de F\'\i sica\\
P. Universidad Cat\'olica de Chile\\
Casilla 306, Santiago 22, Chile
}
\email{rbenguri@fis.puc.cl}

\address{
Barbara Brandolini \\
Universit\`a degli Studi di Napoli ``Federico II''\\
Dipartimento di Ma\-te\-ma\-ti\-ca e Applicazioni ``R. Caccioppoli''\\
Complesso Monte S. Angelo - Via Cintia\\
80126 Napoli, Italia.
}
\email{brandolini@unina.it}

\address{
Francesco Chiacchio \\
Universit\`a degli Studi di Napoli ``Federico II''\\
Dipartimento di Ma\-te\-ma\-ti\-ca e Applicazioni ``R. Caccioppoli''\\
Complesso Monte S. Angelo - Via Cintia\\
80126 Napoli, Italia.
}
\email{francesco.chiacchio@unina.it}

%\date{\today}

\begin{abstract}
Here we prove an isoperimetric inequality for the harmonic  mean of the first $N-1$ non-trivial Neumann eigenvalues of the Laplace-Beltrami operator for domains contained in a hemisphere of $\mathbb{S}^N$.
\end{abstract}

\keywords{}

%\subjclass[2010]{58J65, 58J50, 35P15}

\maketitle

\section{Introduction}
Let $\Omega$ be a bounded domain in $\R^N$ and let us consider the eigenvalues of the classical Neumann-Laplacian in $\Omega$,
$$
0=\mu_0(\Omega)<\mu_1(\Omega)\le\mu_2(\Omega)\le ...
$$
Isoperimetric inequalities for the $\mu_i$'s go back to the
classical theorem of Szeg\H{o} \cite{Sz} and Weinberger \cite{W}: 
\emph{the ball maximizes $\mu_1(\Omega)$ among all bounded
smooth domains $\Omega$ in  $\mathbb{R}^N$ having the same measure}.
Szeg\H{o}, using conformal maps, proved it for simply connected domains in
$\mathbb{R}^2$, while Weinberger introduced a method that allowed him to get
this result in full generality in $\mathbb{R}^N$. His technique has been adapted 
in different contexts  to establish isoperimetric results for combination of eigenvalues 
of the Laplacian with Dirichlet or Neumann boundary conditions (see e.g. \cite{AB92,AB00,Ban, BCdB,BH,C1,CdB,LS}). 
For further references see, e.g., the monographs \cite{C,H1,H2}  and the survey paper \cite{A}.
Actually, as well-known,  the conformal map technique used by Szeg\H{o} allows to prove the stronger 
inequality
\begin{equation}\label{s}
\frac{1}{\mu_1 (\Omega)} +\frac{1}{\mu_2 (\Omega)}
\geq 
\frac{2}{\mu_1 (\Omega^{\star})} ,
 \end{equation}
again for  simply connected domains in $\mathbb{R}^2$. Here and in the sequel, $\Omega^{\star}$
will denote the disk, or, more in general, the ball in $\mathbb{R}^N$ having the 
same measure  as $\Omega$. 
Inequality \eqref{s} is sharp since equality 
sign is achieved if and only if $\Omega$ is a disk. Later, in \cite{AB93}, the assumption of simply connectedness was removed.
In the same paper the authors conjectured that an inequality analogous to \eqref{s} holds true in $\mathbb{R}^N$, namely
\begin{equation*}
\frac{1}{\mu_1 (\Omega)} +....+\frac{1}{\mu_N (\Omega)}
\geq 
\frac{N}{\mu_1 (\Omega^{\star})}. 
 \end{equation*}
 Very recently, in \cite{WX} the authors made an important step toward the proof of this conjecture, by showing the following inequality
 \begin{equation*}
\frac{1}{\mu_1 (\Omega)} +....+\frac{1}{\mu_{N-1} (\Omega)}
\geq 
\frac{N-1}{\mu_1 (\Omega^{\star})}. 
 \end{equation*}
 
The aim of this manuscript is to prove an analogous result for the Laplace-Beltrami operator with Neumann boundary conditions.
 Precisely, we deal with non-trivial Neumann eigenvalues of an arbitrary domain $\Omega$ contained in a hemisphere of $\mathbb{S}^N$, defined by the following boundary value problem 
\begin{equation}\label{pb}
\left\{\begin{array}{ll}
-\Delta_{{\mathbb S}^N} u =\mu u &\mathrm{in } \>\Omega
\\ \\
\frac{\partial u}{\partial \nu}=0 & \mathrm{on }\> \partial \Omega,
\end{array}
\right.
\end{equation}
where $\nu$ is the unit normal to $\partial \Omega$. We still denote the eigenvalues of \eqref{pb} with $\mu_i(\Omega)$ and we intend them arranged in an increasing way, that is
$$
0=\mu_0(\Omega)<\mu_1(\Omega)\le\mu_2(\Omega)\le ...
$$
If we denote by $\{u_i\}_i$ a sequence of orthonormal set of eigenfunctions corresponding to $\mu_i(\Omega)$, then the following variational characterization holds true 
\begin{equation}\label{vc}
\mu_i(\Omega)=\min\left\{\dfrac{\displaystyle\int_\Omega |\nabla \phi|^2\, d\omega}{\displaystyle\int_\Omega \phi^2\, d\omega}:\> \phi\in H^1(\Omega)\setminus\{0\},\> \phi \in \mathrm{span}\left\{u_0,u_1,...,u_{i-1}\right\}^\perp\right\}.
\end{equation}
The analogous of the Szeg\H{o}-Weinberger result is already known and was proved in \cite{AB95}. Our main result is the following
\begin{theorem}\label{theo}
With the notation as above, 
\begin{equation}\label{ineq}
\sum_{i=1}^{N-1} \frac{1}{\mu_i(\Omega)} \ge \sum_{i=1}^{N-1} \frac{1}{\mu_i(D_\gamma)}
\end{equation}
where $D_\gamma$ is a geodesic ball contained in a hemisphere of $\mathbb{S}^N$ having the same $N$-volume as $\Omega$, and $\gamma$ is its radius. More precisely, $\gamma$ is determined by
$$
|\Omega| =N\omega_N \int_0^\gamma \sin^{N-1}t\, dt,
$$
where $\omega_N$ denotes the volume of the unit ball in $\R^N$. Equality sign holds in \eqref{ineq} if and only if $\Omega$ is a geodesic ball.
\end{theorem}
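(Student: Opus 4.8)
The plan is to reduce the harmonic sum to a finite–dimensional trace inequality, feed in Weinberger–type trial functions modeled on the first eigenfunctions of $D_\gamma$, center them by a topological argument, and close the estimate by a spherical rearrangement. I would first isolate the following algebraic fact. Let $\{u_i\}_{i\ge0}$ be the orthonormal eigenfunctions in \eqref{vc}, and let $\phi_1,\dots,\phi_{N-1}\in H^1(\Omega)$ satisfy $\int_\Omega\phi_\alpha\,d\omega=0$. Set $B_{\alpha\beta}=\int_\Omega\phi_\alpha\phi_\beta\,d\omega$ and $A_{\alpha\beta}=\int_\Omega\nabla\phi_\alpha\cdot\nabla\phi_\beta\,d\omega$. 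Expanding $\phi_\alpha=\sum_{i\ge1}a_{\alpha i}u_i$ gives $B=aa^{T}$ and $A=a\,\mathrm{diag}(\mu_i(\Omega))\,a^{T}$; with $b=a\,\mathrm{diag}(\mu_i)^{1/2}$, so that $A=bb^{T}$, one computes
\begin{equation*}
\operatorname{tr}\!\big(BA^{-1}\big)=\operatorname{tr}\!\big(\mathrm{diag}(1/\mu_i(\Omega))\,\Pi\big)=\sum_{i\ge1}\frac{\Pi_{ii}}{\mu_i(\Omega)},\qquad \Pi=b^{T}(bb^{T})^{-1}b,
\end{equation*}
where $\Pi$ is an orthogonal projection of rank $N-1$. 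Since $0\le\Pi_{ii}\le1$, $\sum_i\Pi_{ii}=N-1$, and $1/\mu_i(\Omega)$ is nonincreasing, this yields the variational lower bound $\sum_{i=1}^{N-1}1/\mu_i(\Omega)\ge\operatorname{tr}(BA^{-1})$ for every admissible choice of the $\phi_\alpha$.

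Next I would construct the trial functions. In geodesic polar coordinates $(\theta,\omega)$, $\theta\in[0,\pi/2)$, $\omega\in\mathbb{S}^{N-1}$, centered at a point $p$ to be chosen, let $g$ be the radial profile of the first nontrivial Neumann eigenfunction of $D_\gamma$, so that $g(0)=0$, $g'(\gamma)=0$, and $g(\theta)\omega_i$ realizes $\mu_1(D_\gamma)$; extend it by $G(\theta)=g(\theta)$ for $\theta\le\gamma$ and $G(\theta)=g(\gamma)$ for $\theta>\gamma$, which is $C^1$ precisely because of the Neumann condition. The natural candidates are the $N$ functions $G(\theta)\omega_i$. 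Using that $\Omega$ lies in a hemisphere, a Brouwer/degree argument produces a center $p$ for which $\int_\Omega G(\theta)\omega_i\,d\omega=0$ for every $i$, so these functions are admissible in \eqref{vc}. Their Gram and energy matrices depend only on the radial data $G$, $G'$, $G/\sin\theta$ and on the angular moments $\int_\Omega(\cdots)\omega_i\omega_j\,d\omega$, and on $D_\gamma$ itself they reduce to scalars $B=aI$, $A=bI$ with $b/a=\mu_1(D_\gamma)$, so the construction is tight on the ball.

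Two points remain, and they are the crux. First, the lemma requires exactly $N-1$ functions while the symmetric construction supplies $N$; I would diagonalize the pair $(A,B)$ and discard the single least favorable direction — equivalently, average the resulting $(N-1)$-dimensional traces over the discarded index — to obtain
\begin{equation*}
\sum_{i=1}^{N-1}\frac{1}{\mu_i(\Omega)}\ \ge\ \frac1N\sum_{k=1}^{N}\operatorname{tr}\!\big(B^{(k)}(A^{(k)})^{-1}\big),
\end{equation*}
where $B^{(k)},A^{(k)}$ are obtained by deleting row and column $k$. This symmetric reduction is what produces the sharp factor $N-1$ and is the conceptual heart of the matter. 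Second, one must bound the right-hand side below by $(N-1)/\mu_1(D_\gamma)$; this I expect to be the main obstacle, and would attack it by a spherical rearrangement comparing $\Omega$ with the equimeasurable $D_\gamma$, in the spirit of \cite{AB95}, after verifying the monotonicity of the radial integrands $G^2$ and $G'^2+(N-1)G^2/\sin^2\theta$ that makes the comparison run in the correct direction. The weights $\cot\theta$ and $1/\sin^2\theta$ and the hemisphere restriction are exactly what render this one-dimensional analysis delicate. Tracking the equality cases back through the rearrangement then forces $\Omega$ to be a geodesic ball, giving the stated rigidity in \eqref{ineq}.
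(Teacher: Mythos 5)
Your opening reduction (the projection lemma giving $\sum_{i=1}^{N-1}\mu_i(\Omega)^{-1}\ge\operatorname{tr}(BA^{-1})$ for $N-1$ mean-zero trial functions) is correct, and the trial functions $G(\theta)\,x_i/s$ with the Brouwer centering are exactly the ones the paper uses. But the argument has a genuine gap at the decisive step: after passing to
$$
\frac1N\sum_{k=1}^N\operatorname{tr}\bigl(B^{(k)}(A^{(k)})^{-1}\bigr),
$$
you must bound this quantity from below by $(N-1)/\mu_1(D_\gamma)$, and ``spherical rearrangement of the radial integrands'' does not reach it. The monotonicity of $G$ and of $G/\sin\theta$, together with $\sum_i x_i^2/s^2=1$, control scalar integrals such as $\int_\Omega G^2\,d\omega$, $\int_\Omega G^2/\sin^2\theta\,d\omega$ and $\sum_i\int_\Omega G'^2\,x_i^2/s^2\,d\omega$; they say nothing about the off-diagonal entries $\int_\Omega\bigl(G'^2-G^2/\sin^2\theta\bigr)x_ix_j/s^2\,d\omega$ of $A$, which do not vanish for a general $\Omega$, nor about the inverses $(A^{(k)})^{-1}$. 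A lower bound on $\operatorname{tr}(BA^{-1})$ requires a Loewner-order upper bound on $A$ compatible with a lower bound on $B$, and nothing in your outline produces one. The step you flag as ``the main obstacle'' is therefore not a verification to be filled in later but the actual content of the theorem, and it is left unproved.

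For contrast, the paper avoids matrix inversion entirely. It imposes the stronger orthogonality $\int_\Omega\Phi_i u_j\,d\omega=0$ for all $j=0,\dots,i-1$ (obtained by arguing as in \cite{AB93}, not merely by centering against constants), so that each $\Phi_i$ is individually admissible for $\mu_i(\Omega)$ in \eqref{vc}. Summing the resulting scalar inequalities over $i=1,\dots,N$ and using the pointwise bound
$$
\sum_{i=1}^N\frac{1}{\mu_i(\Omega)}\Bigl(1-\frac{x_i^2}{s^2}\Bigr)\le\sum_{i=1}^{N-1}\frac{1}{\mu_i(\Omega)},
$$
which holds because $\mu_N(\Omega)\ge\mu_i(\Omega)$ and $\sum_i x_i^2/s^2=1$, performs the $N\to N-1$ reduction with no determinant or inverse in sight; the remaining comparison with $D_\gamma$ is then genuinely the scalar rearrangement you describe, closed by the Rayleigh quotient \eqref{mu1} and the degeneracy $\mu_1(D_\gamma)=\dots=\mu_N(D_\gamma)$. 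If you wish to keep the weaker orthogonality hypothesis and the trace lemma, you must supply the missing matrix inequality; as written, the proof does not close.
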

\section{Properties of the Neumann eigenvalues and eigenfunctions of a geodesic ball}
Let $D_\gamma$ be a geodesic ball on $\mathbb{S}^N$ having radius $\gamma$. We think to this geodesic ball as the set of points of $\mathbb{S}^N$ with angle from the positive $x_{N+1}$-axis less that $\gamma$, that is a polar cap. By standard separation of variables technique, we find that the eigenvalues of \eqref{pb}, with $\Omega=D_\gamma$, are the eigenvalues of the following one-dimensional problems
\begin{equation*}\label{eq}
\left\{
\begin{array}{ll}
-\dfrac{1}{\sin^{N-1}\theta} \dfrac{d}{d \theta}\left(\sin^{N-1}\theta \dfrac{dy}{d\theta}\right)+\dfrac{l(l+N-2)}{\sin^2\theta}y=\mu_{l,k}\,  y \quad \mathrm{in} \>(0,\gamma) 
\\ \\
y(0)\> \mathrm{finite}, y'(\gamma)=0
\end{array}
\right.
\end{equation*}
with $l\in \N_0, k\in \N$. Clearly, $\mu_1(D_\gamma)=\min\left\{\mu_{0,2},\mu_{1,1}\right\}$. In \cite{AB95} the authors show that $\mu_1(D_\gamma)=\mu_{1,1}$ at least if $\gamma \le \frac{\pi}{2}$. Hence, an eigenfunction $g$ (assumed positive) associated to $\mu_{1,1}=\mu_1(D_\gamma)$ satisfies
\begin{equation}\label{eqg}
\left\{\begin{array}{ll}
-g''-(N-1)\cot \theta\, g'+\dfrac{N-1}{\sin^2\theta}\,g=\mu\, g \quad \mathrm{in}\> (0,\gamma)
\\ \\
g(0)= g'(\gamma)=0.
\end{array}
\right.
\end{equation}
Multiplying the equation in \eqref{eqg} by $g$ and then integrating on $D_\gamma$ yields 
\begin{equation}\label{mu1}
\mu_1(D_\gamma)=\dfrac{\displaystyle\int_{D_\gamma} \left[g'(\theta)^2+(N-1)\, \frac{g(\theta)^2}{\sin^2\theta}\right]\,d\omega}{\displaystyle\int_{D_\gamma}g(\theta)^2\,d\omega}.
\end{equation}
The following properties are also proved in \cite{AB95}.
\begin{enumerate}
\item If $0<\gamma \le \frac \pi 2$, then $g'>0$ in $[0,\gamma)$, thus $g$ is strictly increasing in $[0,\gamma]$. 

\item $\mu_1(D_\gamma)$ is a strictly decreasing function of $\gamma$ for $0<\gamma\le \frac \pi 2$.

\item $\mu_1(D_\gamma)>N=\mu_1(D_{\pi/2})$ for $0<\gamma<\frac \pi 2$. 
\end{enumerate}
We also recall that $\mu_1(D_\gamma)$ is $N$-fold degenerate, that is
$$
\mu_1(D_\gamma)=\mu_2(D_\gamma)=...=\mu_N(D_\gamma).
$$
Now, define $G:\left[0,\frac \pi 2\right]\to [0,+\infty)$ by
\begin{equation}\label{G}
G(\theta)=\left\{
\begin{array}{ll}
g(\theta) & \theta \le \gamma
\\
g(\gamma) & \theta>\gamma.
\end{array}
\right.
\end{equation}
\begin{lemma}\label{lemma}
The function $\dfrac{G(\theta)}{\sin\theta}$ is strictly decreasing in $\left[0,\frac \pi 2\right]$.
\end{lemma}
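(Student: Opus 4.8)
The plan is to split $\left[0,\frac\pi2\right]$ at $\theta=\gamma$, prove strict monotonicity of $G/\sin\theta$ on each piece, and then glue. On $\left(\gamma,\frac\pi2\right]$ this is immediate: there $G(\theta)=g(\gamma)$ is a positive constant while $\sin\theta$ is strictly increasing (because $\theta\le\frac\pi2$), so the quotient is strictly decreasing. All the content lies in showing that $v(\theta):=g(\theta)/\sin\theta$ is strictly decreasing on $[0,\gamma]$, and it is here that I would invoke property (3), namely $\mu:=\mu_1(D_\gamma)>N$ for $0<\gamma<\frac\pi2$ (the relevant range, since for $\gamma=\frac\pi2$ one has $g=\sin\theta$ and the quotient is constant).

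First I would rewrite \eqref{eqg} in terms of $v$. Substituting $g=v\sin\theta$ and simplifying — the zeroth-order terms collapse, via $1-\cos^2\theta=\sin^2\theta$, into a clean factor $N$ — yields the Sturm--Liouville form
\begin{equation*}
\left(\sin^{N+1}\theta\, v'(\theta)\right)' = (N-\mu)\,\sin^{N+1}\theta\, v(\theta)\qquad\text{on }(0,\gamma).
\end{equation*}
This is the key computation: the change of variables cancels the singular zeroth-order potential $\frac{N-1}{\sin^2\theta}$ and exposes a sign-definite right-hand side.

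Now I would integrate. By property (1), $g$ is strictly increasing from $g(0)=0$, so $g>0$ and hence $v>0$ on $(0,\gamma]$; by property (3), $N-\mu<0$. Thus the right-hand side above is strictly negative on $(0,\gamma)$. For the boundary term I would use $W(\theta):=g'(\theta)\sin\theta-g(\theta)\cos\theta=\sin^2\theta\,v'(\theta)$, so that $\sin^{N+1}\theta\,v'=\sin^{N-1}\theta\,W$; since the admissible indicial root at the regular singular point $\theta=0$ is $s=1$ (giving $g\sim c\,\theta$ with $g'(0)$ finite), one gets $W(0)=0$ and therefore $\sin^{N+1}\theta\,v'\to0$ as $\theta\to0^+$. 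Integrating from $0$ then gives
\begin{equation*}
\sin^{N+1}\theta\, v'(\theta) = (N-\mu)\int_0^\theta \sin^{N}t\,g(t)\,dt < 0\qquad\text{for }\theta\in(0,\gamma],
\end{equation*}
whence $v'<0$ on $(0,\gamma)$, so $v$ is strictly decreasing on $[0,\gamma]$.

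It remains to glue the two pieces. The function $G/\sin\theta$ is continuous at $\gamma$, and using $g'(\gamma)=0$ the left derivative there equals $-g(\gamma)\cos\gamma/\sin^2\gamma$, which coincides with the right derivative $\frac{d}{d\theta}\bigl(g(\gamma)/\sin\theta\bigr)\big|_{\gamma}$; hence the function is in fact $C^1$ across $\gamma$ and strictly decreasing on each side, so it is strictly decreasing on all of $\left[0,\frac\pi2\right]$. The main obstacle I anticipate is not a single hard estimate but two bookkeeping points: verifying that the substitution genuinely produces the factor $(N-\mu)$, so that property (3) can be brought to bear, and justifying the vanishing of the boundary term at the singular endpoint $\theta=0$; both are resolved by the local behavior $g\sim c\,\theta$.
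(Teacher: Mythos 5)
Your proof is correct, but it takes a genuinely different route from the paper's. The paper works with $W(\theta):=G'(\theta)-G(\theta)\cot\theta$ (which is $\sin\theta$ times your $v'$), determines its sign near $0$ from the Frobenius expansion $G(\theta)=\theta-a\theta^3+o(\theta^3)$ together with $\mu_1(D_\gamma)>N$, notes $W(\gamma)<0$, and then rules out an interior positive maximum by a contradiction argument: at such a point $W'=0$, and substituting into the ODE yields $N\,W(\tilde\theta)\cot\tilde\theta=(N-\mu_1(D_\gamma))\,G(\tilde\theta)<0$, contradicting $W(\tilde\theta)>0$. You instead make the substitution $g=v\sin\theta$ exact and global: the Sturm--Liouville identity $\left(\sin^{N+1}\theta\,v'\right)'=(N-\mu)\sin^{N+1}\theta\,v$ is correct (the zeroth-order terms do collapse to $Nv\sin\theta$ via $1-\cos^2\theta=\sin^2\theta$), and integrating it from the regular singular endpoint gives a closed-form, manifestly negative expression for $\sin^{N+1}\theta\,v'$. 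Your approach buys transparency -- the sign of $(G/\sin\theta)'$ is exhibited pointwise rather than inferred by contradiction, and you only need the boundary term to vanish at $0$ rather than a signed asymptotic for $W$ there -- at the cost of the change-of-variables computation. Both arguments rest on the same two inputs, namely $\mu_1(D_\gamma)>N$ for $\gamma<\frac{\pi}{2}$ and the local behavior $g\sim c\,\theta$; and both, as you correctly flag, degenerate at $\gamma=\frac{\pi}{2}$, where $g=\sin\theta$ and the quotient is constant (only non-strict monotonicity survives there, which is what the main proof actually uses for the inequality).
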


\begin{proof}
By Taylor-Frobenius expansion we have
$G(\theta)=\theta-a\,\theta^3+o(\theta^3)$, where 
$$
a=\frac{\mu_1(D_\gamma)-\frac 2 3 (N-1)}{2N+4}>0.
$$
In order to get the claim it is enough to prove that
$$
W(\theta):=G'(\theta)-G(\theta)\,\cot \theta <0.
$$
Using the behavior of $G(\theta)$ near $\theta=0$ we have
$$
W(\theta) =\left(\frac 1 3 -2a\right)\theta^2+o(\theta^2)=\left(\frac{N-\mu_1(D_\gamma)}{N+2}\right)\theta^2+o(\theta^2).
$$
Property (3) implies that $W(\theta) <0$ close to 0. We also know that $W(\gamma)<0$. Assume by contradiction that $W(\theta)$ attained a positive maximum at a point $\tilde \theta \in (0,\gamma )$. Hence
$$
W(\tilde \theta)>0, \quad W'(\tilde \theta)=G''(\tilde \theta)- G'(\tilde \theta)\, \cot \tilde \theta+\frac{G(\tilde \theta)}{\sin^2 \tilde \theta}=0.
$$
Using the equation in \eqref{eqg} we gain
$$
N\left[G'(\tilde \theta)\, \cot \tilde \theta -\frac{G(\tilde\theta)}{\sin^2\tilde \theta}\right]=-\mu_1(D_\gamma)\, G(\tilde \theta),
$$
that is
$$
N\left[W(\tilde \theta)\, \cot \tilde \theta -G(\tilde \theta)\right]=-\mu_1(D_\gamma)\, G(\tilde \theta).
$$
Since we are assuming that $W(\tilde \theta)>0$, property (3) immediately gives a contradiction.

\end{proof}

%\begin{remark}
%$W$ is a wronskian.
%\end{remark}
%
%\begin{remark}
%The $q$ method is not needed in \cite{AB95}.
%\end{remark}

\section{Some mathematical tools needed for the proof of Theorem \ref{theo}}
For the proof of our main result, Theorem \ref{theo}, it is convenient to parametrize the points of $\Omega$ in terms of the coordinates of their stereographic projection (see, for example, \cite{BC,G}). For a point $P\in \Omega$, we denote by $P'$ its stereographic projection from the South Pole $S$ onto the ``equator'' (as illustrated in Figure 1). 

\begin{figure}[h]
\centering 
\includegraphics[scale=0.35]{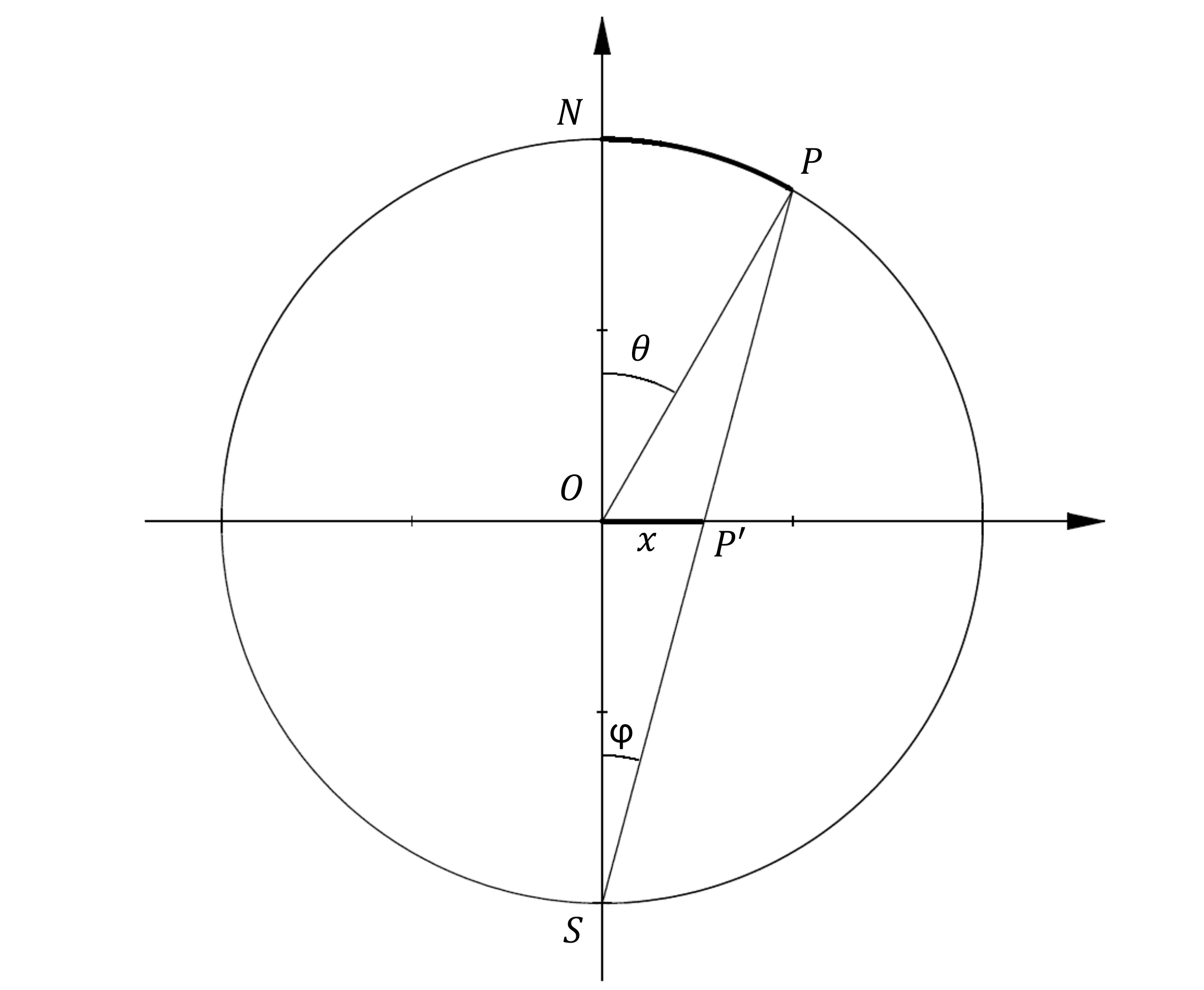}
\caption{Stereographic coordinates}
\end{figure}

For $P'$ we use cartesian coordinates $(x_1,x_2,...,x_N,0)$. We also use $s=\sqrt{\sum_{i=1}^N x_i^2}$, the euclidean distance from $P'$ to the origin $O$. As used we denote by $\theta$ the azimuthal angle, i. e. the angle between $ON$ and $OP$, where $N$ stands for the North Pole. Moreover we denote by $\varphi$ the angle between $SN$ and $SP$. It is clear that $\theta=2\varphi$ and $\tan \varphi=s$. Hence,
\begin{equation}\label{1}
\theta=2\arctan s
\end{equation}
from which we immediately get
\begin{equation}\label{2}
\frac{d\theta}{ds}=\frac{2}{1+s^2}=p(s),
\end{equation}
the conformal factor associated to the differential structure on $\mathbb{S}^N$. In terms of the conformal factor $p$ we can write
\begin{equation*}\label{3}
\nabla_{\mathbb{S}^N}=\frac{1}{p}\nabla_{\R^N},
\end{equation*}
where $\nabla_{\R^N}$ is the standard gradient on the equator. We also have
\begin{equation*}\label{4}
-\Delta_{\mathbb{S}^N}=-p^{-N}\mathrm{div}\left(p^{N-2}\nabla_{\R^N}u\right).
\end{equation*}
Finally, from the figure (or directly from \eqref{1} and \eqref{2}) we also have that
\begin{equation}\label{5}
\sin \theta=p\cdot s.
\end{equation}
In the sequel we also need to compute $\theta_{,i}:=\dfrac{\partial \theta}{\partial x_i}$. Using \eqref{2}, the definition of $s$ and the chain rule we have
\begin{equation*}
\theta_{,i}=\frac{\partial \theta}{\partial s}\cdot s_{,i}=p\, \frac{x_i}{s}, \quad i=1,...,N,
\end{equation*}
and
\begin{equation}\label{7}
\sum_{i=1}^N\theta_{,i}^2=p^2.
\end{equation}
With the notation introduced above, we define 
\begin{equation}\label{8}
\Phi_i(x)=G(\theta)\frac{x_i}{s},\quad i=1,...,N,
\end{equation}
where $G(\theta)$ is defined in \eqref{G}. In order to use $\Phi_i$ as test function in \eqref{vc}, we need the following orthogonality conditions
\begin{equation}\label{ort}
\int_\Omega \Phi_i\, u_j\, d\omega=0, \quad i=1,...,N, \> j=0,...,i-1,
\end{equation}
where, as we said,   $u_j$ is an eigenfunction corresponding to $\mu_j(\Omega)$. To fulfill these conditions we need a special ``orientation'' of the sphere $\mathbb{S}^N$. When $j=0$, conditions \eqref{ort} can be immediately deduced from Theorem 2.1 in \cite{AB95} via the following identity
$$
\int_\Omega \Phi_i\, d\omega=\int_\Omega G(\theta)\, \frac{x_i}{s}\, d\omega=\int_\Omega \frac{G(\theta)}{\sin \theta}\, y_i \, d\omega,
$$ 
choosing $\tilde G(\theta)=\dfrac{G(\theta)}{\sin \theta}$. When $j>0$, conditions \eqref{ort} can be proved arguing in an analogous way as in the proof of Theorem 2.1 in \cite{AB93}.

\section{Proof of Theorem \ref{theo}}

Recalling the definition of $\Phi_i$ given in  \eqref{8}, we get 
\begin{equation}\label{9}
\left(\nabla \Phi_i\right)_j \equiv \Phi_{i,j}=G'(\theta)\, p\, \frac{x_ix_j}{s^2}+G(\theta)\,\frac{\delta_{ij}}{s}-G(\theta)\frac{x_ix_j}{s^3},\quad j=1,...,N.
\end{equation}
Using \eqref{7}, the definition of $s$ and \eqref{9} we have
\begin{equation}\label{10}
\frac{1}{p^2}\left|\nabla \Phi_i\right|^2=G'(\theta)^2 \frac{x_i}{s^2}+G(\theta)^2 \frac{1}{s^2p^2}-G(\theta)^2 \frac{x_i^2}{p^2 s^4}.
\end{equation}
Hence, from \eqref{5} and \eqref{10},
\begin{equation*}
\sum_{i=1}^N\left|\left|\nabla_{\mathbb{S}^N}\Phi_i\right|\right|^2=\frac{1}{p^2}\sum_{i=1}^N\left|\nabla \Phi_i\right|^2=G'(\theta)^2 +G(\theta)^2 \frac{N-1}{s^2p^2}=G'(\theta)^2 +G(\theta)^2 \frac{N-1}{\sin^2\theta}.
\end{equation*}
Using $\Phi_i$ as test function in the variational characterization \eqref{vc} of $\mu_i(\Omega)$, and taking into account the orthogonality conditions \eqref{ort}, we get
\begin{eqnarray}
\int_\Omega \Phi_i^2\, d\omega &\le& \frac{1}{\mu_i(\Omega)} \int_\Omega G'(\theta)^2 \,\frac{x_i^2}{s^2}\, d\omega+\frac{1}{\mu_i(\Omega)}\int_\Omega  \frac{G(\theta)^2}{\sin^2\theta}\left(1-\frac{x_i^2}{s^2}\right)\,d\omega \notag
\\
&=& \frac{1}{\mu_i(\Omega)} \int_{\Omega \cap D_\gamma}G'(\theta)^2 \,\frac{x_i^2}{s^2}\, d\omega+\frac{1}{\mu_i(\Omega)}\int_\Omega  \frac{G(\theta)^2}{\sin^2\theta}\left(1-\frac{x_i^2}{s^2}\right)\,d\omega \notag
\\
&\le& \frac{1}{\mu_i(\Omega)} \int_{ D_\gamma}G'(\theta)^2 \,\frac{x_i^2}{s^2}\, d\omega+\frac{1}{\mu_i(\Omega)}\int_\Omega  \frac{G(\theta)^2}{\sin^2\theta}\left(1-\frac{x_i^2}{s^2}\right)\,d\omega \notag
\\
&=&\frac{1}{N\,\mu_i(\Omega)} \int_{ D_\gamma}G'(\theta)^2  \, d\omega+\frac{1}{\mu_i(\Omega)}\int_\Omega  \frac{G(\theta)^2}{\sin^2\theta}\left(1-\frac{x_i^2}{s^2}\right)\,d\omega. \label{11}
\end{eqnarray}
Summing over $i=1,...,N$ we get
\begin{equation*}\label{12}
\int_\Omega G(\theta)^2\, d\omega \le \frac{1}{N}\sum_{i=1}^N \frac{1}{\mu_i(\Omega)} \int_{ D_\gamma}G'(\theta)^2  \, d\omega+\sum_{i=1}^N\frac{1}{\mu_i(\Omega)}\int_\Omega  \frac{G(\theta)^2}{\sin^2\theta}\left(1-\frac{x_i^2}{s^2}\right)\,d\omega.
\end{equation*}
Now notice that
$$
\sum_{i=1}^N\frac{1}{\mu_i(\Omega)}\left(1-\frac{x_i^2}{s^2}\right)-\sum_{i=1}^{N-1}\frac{1}{\mu_i(\Omega)}=\frac{1}{\mu_N(\Omega)}-\sum_{i=1}^{N}\frac{1}{\mu_i(\Omega)}\,\frac{x_i^2}{s^2}\le 0,
$$
which follows from  $\mu_i(\Omega)\le \mu_N(\Omega)$ for all $i=1,...,N-1$ and the definition of $s$. Hence,
\begin{equation}\label{20}
\int_\Omega G(\theta)^2\, d\omega \le \frac{1}{N}\sum_{i=1}^N \frac{1}{\mu_i(\Omega)} \int_{ D_\gamma}G'(\theta)^2  \, d\omega+\sum_{i=1}^{N-1}\frac{1}{\mu_i(\Omega)}\int_\Omega  \frac{G(\theta)^2}{\sin^2\theta}\,d\omega.
\end{equation}
By Lemma \ref{lemma} we know that the function $\dfrac{G(\theta)}{\sin \theta}$ is decreasing in $(0,\gamma)$. Recalling that $|\Omega|=|D_\gamma|$, we get
\begin{eqnarray}
\int_\Omega  \frac{G(\theta)^2}{\sin^2\theta}\,d\omega&=& \int_{\Omega\cap D_\gamma}  \frac{G(\theta)^2}{\sin^2\theta}\,d\omega+\int_{\Omega\setminus D_\gamma}  \frac{G(\theta)^2}{\sin^2\theta}\,d\omega \notag
\\
& \le& \int_{\Omega\cap D_\gamma}  \frac{G(\theta)^2}{\sin^2\theta}\,d\omega+\frac{G(\gamma)^2}{\sin^2\gamma}\,|\Omega\setminus D_\gamma| \notag
\\
&=& \int_{\Omega\cap D_\gamma}  \frac{G(\theta)^2}{\sin^2\theta}\,d\omega+\frac{G(\gamma)^2}{\sin^2\gamma}\,| D_\gamma\setminus\Omega| \notag
\\
&\le&  \int_{\Omega\cap D_\gamma}  \frac{G(\theta)^2}{\sin^2\theta}\,d\omega+\int_{D_\gamma\setminus\Omega}\frac{G(\theta)^2}{\sin^2\theta}\,d\omega \notag
\\
&=&\int_{D_\gamma}\frac{g(\theta)^2}{\sin^2\theta}\,d\omega. \label{21}
\end{eqnarray}  
On the other side, since $G(\theta)$ is non-decreasing in $\left(0,\frac \pi 2\right)$, we have
\begin{eqnarray}
\int_\Omega G(\theta)^2\, d\omega &=& \int_{\Omega\cap D_\gamma} G(\theta)^2\, d\omega +\int_{\Omega\setminus D_\gamma} G(\theta)^2\, d\omega \notag
\\
&\ge& \int_{\Omega\cap D_\gamma} G(\theta)^2\, d\omega +G(\gamma)^2 |\Omega\setminus D_\gamma|\notag
\\
&=&\int_{\Omega\cap D_\gamma} G(\theta)^2\, d\omega +G(\gamma)^2 |D_\gamma\setminus \Omega|\notag
\\
&\ge& \int_{\Omega\cap D_\gamma} G(\theta)^2\, d\omega +\int_{D_\gamma\setminus \Omega} g(\theta)^2\, d\omega \notag
\\
&=&\int_{D_\gamma} g(\theta)^2\, d\omega.\label{22}
\end{eqnarray}
Using \eqref{20}, \eqref{21}, \eqref{22} and the monotonicity of the sequence $\{\mu_i(\Omega)\}_i$ we have
\begin{eqnarray*}
\int_{D_\gamma} g(\theta)^2\, d\omega &\le & \frac{1}{N}\sum_{i=1}^N \frac{1}{\mu_i(\Omega)}\int_{D_\gamma} g'(\theta)^2\, d\omega+\sum_{i=1}^{N-1}\frac{1}{\mu_i(\Omega)}\int_{D_\gamma}\frac{g(\theta)^2}{\sin^2\theta}\,d\omega
\\
&\le & \frac{1}{N-1}\sum_{i=1}^{N-1} \frac{1}{\mu_i(\Omega)}\int_{D_\gamma} \left[g'(\theta)^2+(N-1)\, \frac{g(\theta)^2}{\sin^2\theta}\right]\,d\omega.
\end{eqnarray*}
Finally, from \eqref{mu1} we conclude 
\begin{equation}\label{fin}
\frac{1}{N-1}\sum_{i=1}^{N-1}\frac{1}{\mu_i(\Omega)}\ge \frac{1}{\mu_1(D_\gamma)}.
\end{equation}
The equality sign holds in \eqref{fin} if and only if $\Omega$ is a geodesic ball.

\end{document}